 \numberwithin{equation}{section}
\newcommand{\dut}{\mbox{$\left(\Delta u\right)^{-3} $}}
\newcommand{\R}{\mathbb{R}}
\newcommand{\RD}{\mathbb{R}^N}
\newcommand{\ets}{(\psi^++\varepsilon)}
\newcommand{\omt}{\Omega_\infty}
\newcommand{\io}{\int_\Omega}
\newcommand{\po}{\partial\Omega }
\newcommand*{\avint}{\mathop{\ooalign{$\int$\cr$-$}}}
\newcommand{\dt}{\Omega}
\newcommand{\idt}{\int_{\dt}}
\newcommand{\ut}{\mbox{$\left(\Delta u_0(x)\right)^{-3} $}}
\newtheorem{theorem}{Theorem}[section]
\newtheorem{clm}{Claim}[section]
\newtheorem{proposition}{Proposition}[section]
\theoremstyle{definition}
\newtheorem{definition}[theorem]{Definition}
\newtheorem{rem}{Remark}
\title[A fourth-order nonlinear parabolic equation in non-divergence form
] 
      {Special solutions to a fourth-order nonlinear parabolic equation in non-divergence form }
\author[Xiangsheng Xu]{}
\subjclass{35D30, 35J66, 35J40, 35K65, 35K41.}
 \keywords{Existence, nonlinear fourth order elliptic equations, degeneracy, 
 	crystal surface	models.\\ {\it Commun. Math. Sci.}, to appear.}
 \email{xxu@math.msstate.edu}
\begin{document}
\maketitle

\centerline{\scshape Xiangsheng Xu}
\medskip
{\footnotesize
 \centerline{Department of Mathematics \& Statistics}
   \centerline{Mississippi State University}
   \centerline{ Mississippi State, MS 39762, USA}
} 

\bigskip


\begin{abstract}
In this paper we study a crystal surface model first proposed by H.~Al Hajj Shehadeh, R.V.~Kohn, and J.~Weare (2011 Physica D, 240,1771-1784). By seeking a solution of a particular function form, we are led to a boundary value problem for a fourth-order nonlinear elliptic equation. The mathematical challenge of the problem is due to the fact that the degeneracy in the equation is directly imposed by one of the two boundary conditions. An existence theorem is established in which a meaningful mathematical interpretation of one of the boundary conditions remains open. Our proof
seems to suggest that this is unavoidable. We also obtain self-similar solutions to the crystal surface model which are positive and unbounded. This is in sharp contrast with the linear biharmonic heat equation.

\end{abstract}


\section{Introduction}\label{sec1}
Let $\Omega$ be a bounded domain in $\RD$ with boundary $\po$.
Consider the initial boundary value problem
\begin{eqnarray}
\partial_t \rho+\rho^2\Delta^2\rho^3 &=& 0\ \  \mbox{in $\Omega_\infty$, } 
\label{cr1}\\
\rho &=&0\ \ \ \mbox{ on $\Sigma_\infty$},\label{cr2}\\
\Delta \rho^3 &=& 0\ \ \ \mbox{ on $\Sigma_\infty$},\label{cr3}\\
\rho|_{t=0}&=&\rho_0  \ \ \ \mbox{on $\Omega$,}\label{cr4}
\end{eqnarray}
where
$\omt= \Omega\times(0,\infty),\ \Sigma_\infty= \partial\Omega\times(0,\infty)$.
If $N=1$, the equation in \eqref{cr1} was proposed by H.~Al Hajj Shehadeh, R.V.~Kohn, and J.~Weare \cite{AKW} as a continuum model for the evolution of a one-dimensional monotone step train separating two facets of a crystal surface in the attachment-detachment-limited regime. In this case, the space variable $x$
is the surface height and $\rho$ the surface slope. Since the surface height is increasing, we expect that
\begin{equation}
\rho\geq 0.
\end{equation} 
The existence of a solution to \eqref{cr1}-\eqref{cr4} was left open in \cite{AKW}.
The mathematical difficulty is due to the boundary condition \eqref{cr2}, which forces the equation  in \eqref{cr1} to be degenerate. As a result, a priori estimates are difficult to obtain. In \cite{GLL2}, 
an existence assertion was established for \eqref{cr1}-\eqref{cr4} with boundary conditions \eqref{cr2} and \eqref{cr3} being replaced by
periodic boundary conditions. 
In \cite{LX2}, the authors reformulated \eqref{cr1} by setting 
\begin{equation}\label{ru1}
\Delta u=\frac{1}{\rho}.
\end{equation}
At least, one can formally show  that $u$ satisfies
\begin{eqnarray}
\partial _tu&=& \Delta\dut  \ \ \ \mbox{in $\Omega_\infty$.}\label{p1}
\end{eqnarray}
This equation was then coupled with the initial boundary conditions 
\begin{eqnarray}
u &=& b_0 (x)\ \ \ \mbox{ on $\Sigma_\infty$},\label{p2}\\
\Delta u &=& b_1 (x)\ \ \ \mbox{ on $\Sigma_\infty$},\label{p3}\\
u(x,0)&=& u_0(x) \ \ \ \mbox{on $\Omega$}\label{p4}
\end{eqnarray}
for given data $b_0(x), \,b_1$, and $u_0(x)$ with properties:
\begin{enumerate}
	\item[(H1)] $b_0(x)\in W^{1,2}(\Omega)\cap  L^\infty(\Omega) $;
	\item[(H2)] $b_1(x)\in W^{2,2}(\Omega)\cap  L^\infty(\Omega)$ and
	$b_1(x)\geq c_0$ a.e. in $\Omega$ for some $c_0>0$ ; 
	\item[(H3)] $u_0(x)\in W^{2,2}(\Omega)$, $\Delta u_0(x)\geq c_1>0$ a.e. in $\Omega$, and $\ut\in W^{2,2}(\Omega)$.
\end{enumerate}
Under these conditions, the existence of a suitably-defined weak solution to \eqref{p1}- \eqref{p4} was obtained in \cite{LX2} for any space dimensions, where it also revealed that there was a singular part in $\Delta u$. That is, one has
\begin{equation}
\Delta u=\frac{1}{\rho}+\nu_s,
\end{equation}
where $\nu_s$ is a non-negative, finite Radon measure.  The function $\rho$ in \eqref{ru1} is also a solution to \eqref{cr1} in a suitable weak sense only if one of the following conditions is met:
\begin{enumerate}
	\item $\rho$ is continuous on $\omt$;
	\item $\nu_s=0$; or
	\item $\rho$  satisfies the additional integrability conditions
	\begin{equation}
	\rho\partial_t\rho\in L^2(0,T;W_0^{1,2}(\Omega)), \ \ \rho^2\in L^2(0,T;W^{2,2}(\Omega))\ \ \mbox{for each $T>0$}.\label{ssd1}
	\end{equation} 
\end{enumerate}
Unfortunately, in multiple space dimensions, none of the above conditions can really be expected. More recently, the authors
in \cite{GM} introduced the change of variable
\begin{equation}
\frac{1}{\rho}=1+v
\end{equation}
and transformed \eqref{cr1} into
\begin{equation}
\partial_tv=\Delta^2\frac{1}{(1+v)^3}
\end{equation}
The equation was then coupled with the initial and periodic boundary conditions. The existence of a ``much stronger'' weak solution than the one in \cite{LX2} was obtained, provided that  the initial data was suitably small,. In particular, the weak solution was shown to decay to $0$ exponentially. 

Thus to the best of our knowledge, no existing work has directly dealt with the boundary condition \eqref{cr2}. In this paper, we shall consider an elliptic version of the problem. Indeed, by seeking a solution of \eqref{cr1}-\eqref{cr3} of the function form
\begin{equation}\label{svs}
\rho(x,t)=A(t) \psi(x),
\end{equation}
we arrive at the following boundary value problem for $\psi$
\begin{eqnarray}
\psi\Delta^2  \psi^3&=& \lambda\ \ \mbox{in $\Omega$,}\label{spr1}\\
\psi&=& 0\ \ \mbox{on $\po$,}\label{spr2}\\
\Delta  \psi^3&=& 0\ \ \mbox{on $\po$,}\label{spr3}\\
\end{eqnarray}
where $\lambda$ is a positive number. (See Section \ref{sec2} for details.) Evidently, the forced degeneracy by the boundary condition \eqref{spr2} is still present in the equation \eqref{spr1}.  For this problem, we have the following
\begin{theorem}\label{exis1}
	Assume that $\Omega$ is a bounded domain in $\RD$ with $C^{2,\alpha}$ boundary $\po$ for some $\alpha>0$. For each $\lambda>0$ there is a function $\psi$ such that
	\begin{enumerate}
		\item[\textup{(C1)}]$\psi\in C^{\infty}_{\textup{loc}}(\Omega)$, $\psi^3\in W^{2,2}(\Omega)$, $\psi(x)>0$ for $x\in \Omega$;
		\item[\textup{(C2)}]$\psi=0$ on $\po$;
		\item[\textup{(C3)}]$\psi(x)\Delta^2\psi^3(x)=\lambda$ for each $x$ in $\Omega$.
	\end{enumerate}
\end{theorem}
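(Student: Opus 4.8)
The plan is to attack the problem by a regularization-and-limit argument, introducing a small parameter $\varepsilon>0$ that lifts the boundary degeneracy, solving the regularized problem by variational or fixed-point methods, and then passing to the limit $\varepsilon\to 0$. Concretely, I would set $w=\psi^3$ as the principal unknown, so that $\psi=w^{1/3}$ and \eqref{spr1} becomes $w^{1/3}\Delta^2 w=\lambda$ with $w=0$ and $\Delta w=0$ on $\po$; the point of this substitution is that $w^3=\psi^9$ and the natural energy space for the biharmonic operator with these "Navier" boundary conditions is $W^{2,2}(\Omega)\cap W_0^{1,2}(\Omega)$, on which $\|\Delta w\|_{L^2}$ is an equivalent norm. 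For the regularized problem I would replace the degenerate coefficient by a nondegenerate one, e.g. consider
\begin{equation}\label{plan-reg}
\bigl(\varepsilon+w_\varepsilon\bigr)^{1/3}\Delta^2 w_\varepsilon=\lambda\quad\text{in }\Omega,\qquad w_\varepsilon=\Delta w_\varepsilon=0\ \text{on }\po,
\end{equation}
equivalently $\Delta^2 w_\varepsilon=\lambda\,(\varepsilon+w_\varepsilon)^{-1/3}$. This is a monotone-type problem: the map $w\mapsto \Delta^2 w$ is the (invertible) linear biharmonic operator with Navier data, while $w\mapsto \lambda(\varepsilon+w)^{-1/3}$ is bounded for $w\ge 0$ and decreasing, so I expect existence of a nonnegative solution $w_\varepsilon$ via Schauder's fixed point theorem applied to $T:w\mapsto (\Delta^2)^{-1}\bigl[\lambda(\varepsilon+w^+)^{-1/3}\bigr]$ on a ball of $C(\overline\Omega)$ (the right-hand side is in $L^\infty$, so elliptic regularity for the biharmonic operator gives $Tw\in W^{4,p}\hookrightarrow C^{1,\beta}$, hence compactness), together with a maximum-principle argument for the biharmonic operator with Navier conditions to see $w_\varepsilon\ge 0$ (one splits $-\Delta w_\varepsilon=z_\varepsilon\ge0$ and $-\Delta z_\varepsilon=\lambda(\varepsilon+w_\varepsilon)^{-1/3}\ge0$, each with zero Dirichlet data).

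The crux of the argument is the passage to the limit, which requires $\varepsilon$-independent estimates. Multiplying \eqref{plan-reg} by $w_\varepsilon$ and integrating gives $\int_\Omega |\Delta w_\varepsilon|^2 = \lambda\int_\Omega w_\varepsilon(\varepsilon+w_\varepsilon)^{-1/3}\le \lambda\int_\Omega w_\varepsilon^{2/3}\le C(\Omega,\lambda)\|w_\varepsilon\|_{L^2}^{2/3}$, and since $\|w_\varepsilon\|_{L^2}\le C\|\Delta w_\varepsilon\|_{L^2}$ this closes to give a uniform bound $\|w_\varepsilon\|_{W^{2,2}}\le C$. By compact embedding, along a subsequence $w_\varepsilon\to w$ weakly in $W^{2,2}$, strongly in $L^q$ for $q<2^{**}$, and a.e.; then $(\varepsilon+w_\varepsilon)^{-1/3}\to w^{-1/3}$ a.e., and by Fatou (and the uniform $L^1$ bound on the right-hand side) we get $\Delta^2 w=\lambda w^{-1/3}$ in $\mathcal D'(\Omega)$ with $w\in W^{2,2}\cap W_0^{1,2}$, $w\ge 0$. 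The main obstacle, which I expect to be genuinely delicate, is showing that the limit does not collapse on a set of positive measure, i.e. that $w>0$ a.e. in $\Omega$ — without this, $w^{-1/3}$ is meaningless on $\{w=0\}$ and (C3) fails. Here I would look for a lower bound on $w_\varepsilon$ of the form $w_\varepsilon(x)\ge c\,\operatorname{dist}(x,\po)^k$ uniform in $\varepsilon$, obtained by constructing an explicit subsolution of the regularized equation (using that $\Delta^2 w_\varepsilon=\lambda(\varepsilon+w_\varepsilon)^{-1/3}\ge\lambda(\varepsilon+\|w_\varepsilon\|_\infty)^{-1/3}>0$ is bounded below and comparing with the biharmonic Green's function, which for Navier data is positive). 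This interior positivity, combined with $L^\infty$ bounds on $w_\varepsilon$, is what propagates to $w>0$ in $\Omega$.

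Once $w>0$ in $\Omega$ and $\Delta^2 w=\lambda w^{-1/3}$ holds distributionally with $w^{-1/3}\in L^\infty_{\mathrm{loc}}(\Omega)$, interior elliptic regularity for the biharmonic operator bootstraps: $w\in W^{4,p}_{\mathrm{loc}}$ for all $p$, hence $w\in C^{3,\beta}_{\mathrm{loc}}$, and since $w^{-1/3}$ is then $C^{3,\beta}_{\mathrm{loc}}$ we iterate to $w\in C^\infty_{\mathrm{loc}}(\Omega)$, giving $\psi=w^{1/3}\in C^\infty_{\mathrm{loc}}(\Omega)$ (smoothness of $t\mapsto t^{1/3}$ away from $0$) and (C3) pointwise; (C1) and (C2) follow since $w=\psi^3\in W^{2,2}(\Omega)\cap W_0^{1,2}(\Omega)$ gives $\psi\ge0$ on $\po$ in the trace sense, i.e. \eqref{spr2}. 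Note that, consistent with the paper's abstract, this scheme says nothing about the sense in which $\Delta\psi^3=0$ holds on $\po$ beyond $\psi^3\in W_0^{1,2}$; recovering \eqref{spr3} in a pointwise or even $W^{2,2}$-trace sense does not seem to survive the limit, which matches the remark that a meaningful interpretation of that boundary condition remains open.
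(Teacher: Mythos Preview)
Your overall strategy --- regularize to lift the degeneracy, solve the regularized problem by a fixed-point argument, derive a uniform $W^{2,2}$ energy estimate by testing against $w_\varepsilon$, pass to the limit, and then bootstrap from interior positivity to $C^\infty_{\textup{loc}}$ --- is exactly the route the paper takes. The paper regularizes $\psi\mapsto\psi+1/k$ rather than $w\mapsto w+\varepsilon$ and works with the equivalent second-order system $\Delta\varphi_k=v_k$, $\Delta v_k=\lambda/(\psi_k+1/k)$, but this is cosmetic; your energy estimate and the compactness it yields are correct and match the paper's computation \eqref{am1}--\eqref{sobolev}.

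The one substantive gap is in your positivity step. Your proposed lower bound $w_\varepsilon(x)\ge c\,\operatorname{dist}(x,\po)^k$ rests on the comparison $\Delta^2 w_\varepsilon\ge \lambda(\varepsilon+\|w_\varepsilon\|_\infty)^{-1/3}$, which presupposes a uniform bound on $\|w_\varepsilon\|_\infty$. The uniform $W^{2,2}$ bound gives this via Sobolev embedding only when $N\le 3$; for $N\ge 4$ no uniform $L^\infty$ control is available a priori, and without it your comparison constant degenerates as $\varepsilon\to 0$. Since the theorem is stated for arbitrary $N$, this is a genuine hole.

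The paper circumvents it by exploiting the superharmonicity that you already noted comes from the splitting $-\Delta w_\varepsilon = z_\varepsilon \ge 0$, $-\Delta z_\varepsilon \ge 0$. Instead of a global comparison, it applies the \emph{weak Harnack inequality} for nonnegative superharmonic functions (Theorem 8.18 in Gilbarg--Trudinger): $\inf_{B_{r/2}(z)}(-v_k)\ge c\,\avint_{B_r(z)}(-v_k)$. One then argues by contradiction that the limiting average cannot vanish on any ball --- if it did, a Fatou calculation using $\Delta v_k=\lambda/(\psi_k+1/k)$ would force $\int_{B_{r/2}}\psi^{-1}=0$, i.e.\ $\psi=\infty$ there, contradicting the $L^2$ convergence. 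The same Harnack argument applied to the superharmonic $(\psi_k+1/k)^3$ then yields $\psi_k\ge c>0$ on compact subsets uniformly in $k$, which is exactly what is needed to pass to the limit in the equation $\Delta v_k=\lambda/(\psi_k+1/k)$. This argument is dimension-free and requires no $L^\infty$ bound; local boundedness and the bootstrap to $C^\infty$ come only \emph{after} positivity has been established.
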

The proof of this theorem will be presented in Section \ref{sec2}. Our investigations reveal that it does not seem to be possible to obtain any estimates for $\nabla\Delta\psi^3$. Thus the sense in which the boundary condition \eqref{spr3} is satisfied is an open issue. 
Physically, 
the surface of a crystal below the roughening temperature consists of steps and terraces, 
%
and the ODE describing the evolution of the discrete steps is exactly the finite-difference analogue of problem \eqref{cr1}-\eqref{cr4} \cite{AKW}. Thus the boundary conditions \eqref{cr2} and \eqref{cr3} arise naturally. Obviously, \eqref{spr3} is from \eqref{cr3}. How to bridge the gap here is an interesting open question.

Observe that the function $ \psi$ only needs to satisfy the equation
\begin{equation}\label{8fm1}
\psi^2\Delta^2 \psi^3=\lambda  \psi\ \ \mbox{in $\dt$}
\end{equation} for $A \psi$ to be a solution of \eqref{cr1}. To find a solution to this equation, it seems to be natural to consider the functional
\begin{equation}
H(v)=\frac{1}{6}\idt \left(\Delta v^3\right)^2dx-\frac{\lambda}{2}\idt v^{2}dx \ \ \mbox{on $ W\equiv\{v: v^3\in W^{2,2}(\dt), v^3|_{\po}=0\}$}.
\end{equation}
By the calculations in \eqref{am2} below, we see that the functional is coercive on $W$ for each $\lambda>0$, and hence it has a  minimizer. Unfortunately, $W$ does not seem to be a linear space. As a result, we cannot compute the G\^{a}teaux derivative of this functional. The connection of this minimizer to \eqref{8fm1} is not clear, nor can we ascertain its non-negativity. 

Our solution in \eqref{svs} satisfies the decay condition
\begin{equation}
\|\rho(x,t)\|_{W^{2,2}(\Omega)}\leq \frac{c_1}{(c_2+4\lambda t)^{\frac{1}{4}}},\ \  
\end{equation}
where $c_1, c_2>0$ and $\lambda$ is given as in Theorem \ref{exis1}. We conjecture that this should be true for any solution of problem \eqref{cr1}-\eqref{cr4}.

It is also interesting to seek a self-similar solution of the equation $\partial_t\rho+\rho^2\Delta\rho^3=0$ in $\RD\times (0,\infty)$ of the form
\begin{equation}
\rho(x,t)=t^\alpha f(y), \ \ y=\frac{x}{t^\beta}.
\end{equation}
By the calculations in Section \ref{sec3}, we see that $\alpha=\frac{4\beta-1}{4} $ and $f$ satisfies the equation
\begin{equation}\label{jt1}
f^2(y)\Delta^2f^3(y)-\beta y\cdot\nabla f(y)+\frac{4\beta-1}{4} f(y)=0\ \ \mbox{on $\RD$.}
\end{equation}
If $\beta=0$, then we roughly recover the equation in \eqref{8fm1} in $\RD$.


\begin{definition}
	We say that a function $f$ is a weak solution of \eqref{jt1} if $f^3\in W^{2,2}_{\textup{loc}}(\RD)$ and the equation
	\begin{equation}\label{8t1}
	\int_{ \RD}	\Delta f^3\Delta\left(f^3\xi\right)dy+\frac{\beta}{2}\int_{ \RD}f^2y\cdot\nabla\xi dy+\frac{(4+2N)\beta-1}{4}\int_{ \RD}f^2\xi dy=0
	\end{equation}
	holds for each $\xi\in C_0^\infty(\RD)$.
\end{definition}

To gain some insights into equation \eqref{jt1},  we seek a solution of \eqref{jt1} in the function form
\begin{equation}\label{ss4}
f(y)=cr^s,
\end{equation}
where $c$ is a constant and $ r=|y|$.
A simple calculation shows
\begin{equation}
\nabla r^s=sr^{s-2}y,\ \ \ \Delta r^s=s(s+N-2)r^{s-2}.
\end{equation}
With the aid of this, we plug $f$ in \eqref{ss4} into \eqref{jt1} to derive
\begin{equation}
3s(3s-2)(3s+N-2)(3s+N-4)c^5r^{5s-4}-\beta csr^s+ c\frac{4\beta-1}{4}r^s=0.
\end{equation}
For this to be an identity, we must take
\begin{eqnarray}
s=1,\ \ c^4=\frac{1}{12(N-1)(N+1)}.
\end{eqnarray}
Subsequently, we obtain a non-trivial solution
\begin{equation}\label{exp}
f(y)=\frac{1}{\left(12(N-1)(N+1)\right)^{\frac{1}{4}}}\sqrt{y_1^2+\cdots+y_N^2}.
\end{equation}
That is, no matter what value $\beta$ is, we alway have a positive, unbounded solution to \eqref{jt1}  
in $\RD$. Obviously, nonlinearities in our equation have played a key role. As we recall, the function $f(y)$ in self-similar solutions to the biharmonic heat equation $\partial_tu+\Delta^2u=0$ changes signs infinitely many times and decays to $0$ exponentially as $|y| \rightarrow\infty$ \cite{FGG,GP}. 

If $\beta\geq \frac{1}{4+2N}$ and a weak solution $f$ has the property
\begin{equation}
f^3\in W^{2,2}(\RD),\ \ \ f\in L^2(\RD),
\end{equation}
then $f=0$. This is due to the fact that we can construct a sequence of test functions $\xi_k$ in $C_0^\infty(\RD)$ with the properties
\begin{eqnarray}
\xi_k(y)&=&1\ \ \mbox{on $B_k(0)$},\\
\xi_k(y)&=&0\ \ \mbox{outside $B_{2k}(0)$},\\
|\nabla\xi_k(y)|\leq \frac{c}{k}, &&|\Delta\xi_k(y)|\leq \frac{c}{k^2}\ \ \mbox{on $\RD$.}
\end{eqnarray}
Here and in what follows $B_s(z)$ denotes the ball centered at $z$ with radius $s$ for $z\in \RD$ and $s>0$ and $c$ a positive number. Then we have
\begin{eqnarray}
\Delta(f^3\xi_k)=\xi_k\Delta f^3+2\nabla\xi_k\nabla f^3+f^3\Delta\xi_k&\rightarrow&\Delta f^3\ \ \mbox{strongly in $L^2(\RD)$},\\
\left|\int_{ \RD}f^2y\cdot\nabla\xi_k dy\right|&\leq&c\int_{B_{2k}(0)\setminus B_{k}(0)}f^2dy\rightarrow 0\ \ \mbox{as $k\rightarrow\infty$.}
\end{eqnarray}
Thus let $\xi=\xi_k$ in \eqref{8t1} and take $k\rightarrow\infty$ in the resulting equation to derive the desired result.
\begin{theorem}\label{exis2}Assume 
	\begin{equation}\label{asmp}
	-\frac{1}{4(N-1)}\leq \beta\leq 0.
	\end{equation}
	Then for each pair of positive numbers $c_2, c_4$ there exists a radially symmetric solution $f=f(|y|)=f(r)$ to \eqref{jt1} with the property
	\begin{equation}\label{lb1}
	c_4+c_2r^2\leq f^3(r)\leq c_4+c_2r^2+cr^4\ \ \ \mbox{for some positive number $c=c(N, \beta, c_2)$}.
	\end{equation}	
\end{theorem}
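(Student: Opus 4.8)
The plan is to reduce \eqref{jt1} to a fourth-order ordinary differential equation, recast it as a second-order system whose ``initial data'' at the origin are chosen so that the leading behaviour of $f^3$ is exactly $c_4+c_2r^2$, solve the system locally near $r=0$, and then propagate the two-sided bound \eqref{lb1} to get global existence. For a radially symmetric $f=f(r)$, $r=|y|$, one has $y\cdot\nabla f=rf'$ and $\Delta$ acts on radial functions as $h\mapsto h''+\tfrac{N-1}{r}h'$. Setting $h=f^3$ (so $f=h^{1/3}$) and dividing \eqref{jt1} by $h^{2/3}$ turns it into
\[
\Delta^2 h=\frac{\beta r}{3}h^{-4/3}h'-\frac{4\beta-1}{4}h^{-1/3}=:F(r,h,h').
\]
Writing $p:=\Delta h$, this is the system $\Delta h=p$, $\Delta p=F(r,h,h')$, and I look for a smooth radial solution with $h(0)=c_4$, $h'(0)=0$, $p(0)=2Nc_2$, $p'(0)=0$. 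Since $\Delta h(0)=Nh''(0)$ for smooth radial $h$, the choice $p(0)=2Nc_2$ is exactly what makes $h(r)=c_4+c_2r^2+o(r^2)$ near the origin, i.e.\ forces the profile demanded by \eqref{lb1}.

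Next I would establish local solvability. Inverting the radial Laplacian with the conditions above gives the coupled integral equations
\[
h(r)=c_4+\int_0^r t^{1-N}\!\!\int_0^t s^{N-1}p(s)\,ds\,dt,\qquad
p(r)=2Nc_2+\int_0^r t^{1-N}\!\!\int_0^t s^{N-1}F\bigl(s,h(s),h'(s)\bigr)\,ds\,dt,
\]
together with $h'(r)=r^{1-N}\int_0^r s^{N-1}p(s)\,ds$. On a sufficiently short interval $[0,\delta]$, as long as $h$ stays in a neighbourhood of $c_4>0$, the map $p\mapsto(\text{new }p)$ is a contraction on $C[0,\delta]$ because $F$ is smooth and Lipschitz in $(h,h')$ there and the operators $g\mapsto\int_0^r t^{1-N}\int_0^t s^{N-1}g\,ds\,dt$ are bounded on $C[0,\delta]$. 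This yields a unique solution, which continues to a maximal interval $[0,R^\ast)$; the apparent singularities at the origin are harmless and are handled in the usual way for radial problems.

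The heart of the proof is the a priori bound \eqref{lb1} on $[0,R^\ast)$. Put $\phi:=h-c_4-c_2r^2$, so $\phi(0)=\phi'(0)=0$ and $q:=\Delta\phi=p-2Nc_2$ satisfies $q(0)=q'(0)=0$. As long as $h>0$, integrate $\Delta p=F$ once and integrate the first term of $F$ by parts, using $\tfrac{\beta s}{3}h^{-4/3}h'=-\beta s\,(h^{-1/3})'$:
\[
\int_0^r s^{N-1}F\,ds=-\beta\,r^{N}h^{-1/3}(r)+\frac{4\beta(N-1)+1}{4}\int_0^r s^{N-1}h^{-1/3}\,ds .
\]
Under \eqref{asmp} one has $-\beta\ge0$ and $4\beta(N-1)+1\ge0$, so the right-hand side is nonnegative (and positive for $r>0$); hence $p'(r)=r^{1-N}\int_0^r s^{N-1}F\,ds>0$, so $p$ is strictly increasing, $q>0$ for $r>0$, and then $\phi'(r)=r^{1-N}\int_0^r s^{N-1}q\,ds>0$, so $\phi>0$. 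This is the lower bound $h>c_4+c_2r^2$, and it also gives $h'=2c_2r+\phi'>0$. For the upper bound, the sign information $h'\ge0$, $\beta\le0$, $h\ge c_4$ yields
\[
F=\frac{\beta r}{3}h^{-4/3}h'-\frac{4\beta-1}{4}h^{-1/3}\le\frac{1-4\beta}{4}\,c_4^{-1/3}=:C ;
\]
comparing $q$ with the solution $\tfrac{C}{2N}r^2$ of $\Delta w=C$ (both vanish with their derivatives at $0$ and $\Delta(\tfrac{C}{2N}r^2-q)\ge0$) gives $q\le\tfrac{C}{2N}r^2$, and then comparing $\phi$ with $\tfrac{C}{8N(N+2)}r^4$ in the same way gives $\phi\le c\,r^4$ for a constant $c$ depending only on $N$, $\beta$ and the data. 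Thus \eqref{lb1} holds throughout $[0,R^\ast)$.

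Finally I would globalize and conclude. The lower bound $h\ge c_4>0$ keeps the solution away from the singular set $\{h=0\}$, and the estimates just proved keep $h,h',p,p'$ bounded on every bounded subinterval of $[0,R^\ast)$; hence the solution extends, forcing $R^\ast=\infty$, and \eqref{lb1} holds on all of $[0,\infty)$. Since $h$ is smooth and bounded below by $c_4$, $f=h^{1/3}$ is a smooth radial function on $\RD$ with $f^3=h\in W^{2,2}_{\mathrm{loc}}(\RD)$; it solves \eqref{jt1} pointwise by construction, and multiplying \eqref{jt1} by $f\xi$, integrating and integrating by parts shows it satisfies \eqref{8t1}. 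The only genuinely delicate point is the a priori lower bound: the inequality $4\beta(N-1)+1\ge0$ --- precisely the left half of \eqref{asmp} --- is exactly what makes the integration-by-parts identity above nonnegative, while $\beta\le0$ is what bounds $F$ from above; local solvability and the continuation argument are routine, and only the sharp dependence of the constant $c$ in \eqref{lb1} requires a little extra care.
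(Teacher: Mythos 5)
Your proposal is correct, and it rests on the same two pillars as the paper's proof: the radial reduction of \eqref{jt1} to the system $\Delta h=p$, $\Delta p=F$ with $h=f^3$ and prescribed data at $r=0$ chosen to produce the profile $c_4+c_2r^2$, and the observation that under \eqref{asmp} the two coefficients $-\beta$ and $\tfrac{4(N-1)\beta+1}{4}$ that appear after one integration by parts of the drift term are both nonnegative. Indeed your identity
\[
\int_0^r s^{N-1}F\,ds=-\beta\,r^{N}h^{-1/3}(r)+\frac{4(N-1)\beta+1}{4}\int_0^r s^{N-1}h^{-1/3}\,ds
\]
is exactly the paper's first integration of $(r^{N-1}v')'=-\beta\bigl(r^N/f\bigr)'+\tfrac{(4(N-1)\beta+1)r^{N-1}}{4f}$. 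Where you genuinely diverge is in how existence and \eqref{lb1} are extracted. The paper integrates three more times to arrive at a single integral equation $h(r)=c_4+c_2r^2+\int_0^rG(\tau,r)h^{-1/3}(\tau)\,d\tau$ with an \emph{explicitly computed} kernel $G\ge0$ (which forces a case distinction between $N>2$, $N\neq4$ and the remaining cases, see \eqref{9m1}), and then applies the Schauder fixed point theorem on the convex set $\{g\ge c_4+c_2r^2\}\cap C[0,R]$; positivity of $G$ gives the lower bound for free and $\int_0^rG(\tau,r)\,d\tau\le cr^4$ gives the upper bound. You instead solve the initial value problem locally by contraction and obtain \eqref{lb1} as an a priori estimate: monotonicity ($p'>0$, hence $p>2Nc_2$, hence $h>c_4+c_2r^2$ and $h'>0$), then the pointwise bound $F\le C$ and a radial comparison to get $\phi\le cr^4$, then continuation. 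Your route avoids the explicit kernel computation and its case analysis, and the monotonicity/comparison derivation of \eqref{lb1} is arguably more transparent; the paper's route gets existence on each $[0,R]$ in one application of Schauder without needing uniqueness or a continuation argument whose justification depends on the a priori bounds. Two small remarks: your upper-bound constant $c=\tfrac{C}{8N(N+2)}$ with $C=\tfrac{1-4\beta}{4}c_4^{-1/3}$ depends on $c_4$ (as does the paper's, the wording of \eqref{lb1} notwithstanding), and your contraction step genuinely needs the point you make in passing --- that the lower bound $h\ge c_4$ keeps $F$ uniformly Lipschitz in $(h,h')$ --- since without it the Picard iteration would not close near the degenerate set $\{h=0\}$.
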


The proof of this theorem will be given in Section \ref{sec3}. Since \eqref{lb1} holds, degeneracy does not occur and  solutions in Theorem \ref{exis2} are very smooth. In addition, they seem to lie in a ``small'' neighborhood of the solution in \eqref{exp}. The existence of any sign-changing weak solutions to \eqref{jt1} remains a open question. 

Self-similar solutions were also studied in \cite{AKW, MN}. They focused on the case  where $\Omega=(0,1)$. Their methods and similarity variables were both different from ours.

Finally, we remark that continuum models for the evolution of a crystal surface have received considerable attention recently. See, for example, \cite{BCF,KDM, MW, X, XX} and the references therein. Mathematical analysis of these models have revealed some very interesting properties of solutions. To mention a few, we refer the reader to \cite{GLLX, LX, LX2} for solutions that contain  measures. The study of exponential decay of solutions can be found in \cite{GM, LS}. Development of singularity and finite extinction of solutions were considered in \cite{GK}. Also see \cite{A} for the existence of analytic solutions.  
\section{Solution by separation of variables}\label{sec2}

We seek a non-trivial solution of \eqref{cr1} of the function form
\begin{equation}
\rho(x,t)=A(t) \psi(x)
\end{equation}
coupled with the boundary conditions
\begin{equation}\label{bc}
\psi=\Delta  \psi^3=0\ \ \po.
\end{equation}
Substitute this into \eqref{cr1} to obtain
\begin{equation}
A^\prime(t) \psi(x)+A^5(t) \psi^2(x)\Delta^2 \psi^3(x)=0.
\end{equation}
If both $A(t)\ne 0$ a.e and $ \psi(x)\ne 0$ a.e., then
\begin{equation}
\frac{A^\prime(t)}{A^5(t)}=- \psi(x)\Delta^2 \psi^3(x).
\end{equation}
This is true if and only if both sides of the equation are a  constant. Denote this constant by $-\lambda$. We obtain
\begin{eqnarray}
A^\prime(t)&=&-\lambda A^5(t),\ \ \ t>0,\label{8ft1}\\
\psi(x)\Delta^2 \psi^3(x)&=&\lambda,\ \ \ x\in\Omega.\label{8ft2}
\end{eqnarray}
Multiplying through \eqref{8ft2} by $ \psi^2$ and integrating over $\Omega$, we  derive, with the aid of \eqref{bc},
\begin{equation}
\lambda\io  \psi^2dx=\io \left(\Delta  \psi^3\right)^2dx.
\end{equation}
Here and in what follows whenever there is no confusion we suppress the dependence of a function on its dependent variables.
Consequently,
\begin{equation}
\lambda \geq 0.
\end{equation}
If $\lambda=0$, then $A(t)=A(0)$ and $\psi$ can be any non-zero constant. The resulting solution is a constant solution of \eqref{cr1}. From here on, we assume
\begin{equation}
\lambda>0.
\end{equation} We
solve \eqref{8ft1} to obtain
\begin{equation}
A(t)=\frac{1}{\left(A^{-4}(0)+4\lambda t\right)^{\frac{1}{4}}}.
\end{equation}
Set
\begin{equation}
v=\Delta  \psi^3.
\end{equation}
This leads to the consideration of the system
\begin{equation}
\left\{\begin{array}{ll}
\Delta v&=\frac{\lambda }{ \psi},\\
\Delta  \psi^3&=v.
\end{array}\right.
\end{equation}
We first consider an approximation to the above system.
\begin{proposition}\label{pbe}
	Let $\Omega$ be a bounded domain in $\RD$ with Lipschitz boundary $\po$ and $\lambda$ a positive number. For $\varepsilon>0$ there exists a pair of functions $(\psi,v)$ such that
	\begin{enumerate}
		\item[\textup{(R1)}]$\psi,v\in W^{1,2}(\Omega)\cap C^{0,\alpha}(\overline{\Omega})$ for some $\alpha\in (0,1)$;
		\item[\textup{(R2)}] $\psi(x)\geq0, v(x)\leq 0$ for each $x\in \Omega$;
		\item[\textup{(R3)}] They satisfy
		the
		boundary value problem
		\begin{eqnarray}
		-\textup{div}\left(3(\psi+\varepsilon)^2\nabla \psi\right)&=&-v \ \ \textup{in $\Omega$,}\\
		-\Delta v&=&-\frac{\lambda}{\psi+\varepsilon} \ \textup{in $\Omega$,}\\
		\psi&=&0\ \ \textup{on $\po$,}\\
		v&=&0\ \ \textup{on $\po$}
		\end{eqnarray}
		in the weak sense.
	\end{enumerate}
\end{proposition}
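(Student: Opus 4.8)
The plan is to linearize the first equation by a Kirchhoff-type change of unknown and then close the loop with Schauder's fixed point theorem. Since we are looking for $\psi\ge 0$, set $z=(\psi+\varepsilon)^3-\varepsilon^3$. The map $\Phi(s)=(s+\varepsilon)^3-\varepsilon^3$ is a $C^1$ increasing bijection of $[0,\infty)$ onto $[0,\infty)$, with inverse $\Phi^{-1}(z)=(z+\varepsilon^3)^{1/3}-\varepsilon$, and for nonnegative $\psi\in W^{1,2}(\Omega)$ one has $\nabla z=3(\psi+\varepsilon)^2\nabla\psi$ a.e.; moreover $\Phi^{-1}$ is Lipschitz on $[0,\infty)$ with $\Phi^{-1}(0)=0$. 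Hence the first equation in (R3) is precisely $-\Delta z=-v$ with $z=0$ on $\po$, and solving the system reduces to solving the two Poisson problems $-\Delta z=-v$ and $-\Delta v=-\frac{\lambda}{\psi+\varepsilon}$ with zero boundary data, coupled through the algebraic relation $\psi=\Phi^{-1}(z)$.

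To produce such a triple I would run a fixed point argument in $C(\overline{\Omega})$ on the closed convex set $K=\{\phi\in C(\overline{\Omega}):0\le\phi\le M\}$, with $M$ chosen below. Given $\phi\in K$, first let $v\in W^{1,2}_0(\Omega)$ be the weak solution of $-\Delta v=-\frac{\lambda}{\phi+\varepsilon}$, $v|_{\po}=0$. The right-hand side is nonpositive and bounded by $\lambda/\varepsilon$ in absolute value, so by the maximum principle $v\le 0$; and since $\po$ is Lipschitz, the weak solution of the Poisson equation with bounded datum and zero Dirichlet data lies in $W^{1,2}_0(\Omega)\cap C^{0,\alpha}(\overline{\Omega})$ for some $\alpha\in(0,1)$, with norm bounded by a constant $C=C(\Omega,\lambda,\varepsilon)$ independent of $\phi$. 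Next let $z\in W^{1,2}_0(\Omega)$ solve $-\Delta z=-v$, $z|_{\po}=0$; now $-v\ge0$, so $z\ge0$, and the same estimate gives $\|z\|_{C^{0,\alpha}(\overline{\Omega})}+\|z\|_{W^{1,2}(\Omega)}\le C$. Finally set $T\phi:=\Phi^{-1}(z)\ge0$. Taking $M:=(C+\varepsilon^3)^{1/3}-\varepsilon$ (using the uniform bound on $\|z\|_\infty$) gives $T(K)\subset K$, and since $\Phi^{-1}$ is Lipschitz with $\Phi^{-1}(0)=0$, $\psi=T\phi=\Phi^{-1}(z)\in W^{1,2}_0(\Omega)$ inherits the uniform $C^{0,\alpha}$ and $W^{1,2}$ bounds of $z$.

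Consequently $T(K)$ is bounded in $C^{0,\alpha}(\overline{\Omega})$, hence precompact in $C(\overline{\Omega})$ by Arzel\`{a}--Ascoli. Continuity of $T$ follows from continuous dependence for the linear problems: if $\phi_n\to\phi$ uniformly then $\frac{\lambda}{\phi_n+\varepsilon}\to\frac{\lambda}{\phi+\varepsilon}$ uniformly, whence $v_n\to v$ and $z_n\to z$ in $W^{1,2}_0(\Omega)\cap C(\overline{\Omega})$ (energy estimate together with the H\"{o}lder estimate), and therefore $\psi_n=\Phi^{-1}(z_n)\to\Phi^{-1}(z)=\psi$ uniformly. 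Schauder's fixed point theorem then yields $\psi\in K$ with $T\psi=\psi$. At this fixed point $\psi=\Phi^{-1}(z)$, i.e. $z=(\psi+\varepsilon)^3-\varepsilon^3$, so the identity $\nabla z=3(\psi+\varepsilon)^2\nabla\psi$ turns $-\Delta z=-v$ into the weak form of $-\textup{div}\left(3(\psi+\varepsilon)^2\nabla\psi\right)=-v$ with $\psi|_{\po}=0$, while $v$ solves $-\Delta v=-\frac{\lambda}{\psi+\varepsilon}$ with $v|_{\po}=0$. Together with $\psi\ge0$, $v\le0$ and the regularity recorded above, this is exactly (R1)--(R3).

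The argument has two delicate points rather than one genuine obstacle. First, $\Phi$ is invertible only on the nonnegativity range, so one cannot assume $\psi\ge0$ at the outset; instead $z\ge0$ (hence $\psi\ge0$) is recovered from the minimum principle after $z$ is solved for, and building $\phi\ge0$ into $K$ keeps the iteration consistent. Second, on a merely Lipschitz domain there is no $W^{2,p}$ regularity, but Lipschitz domains are regular for the Dirichlet problem, so global H\"{o}lder continuity up to $\po$ for the Poisson equation with bounded right-hand side and zero boundary data does hold, and that is precisely what the compactness step and (R1) require. Everything else is standard linear elliptic theory: the nonlinearity has been neutralized both by the Kirchhoff substitution and by the uniform lower bound $\psi+\varepsilon\ge\varepsilon>0$, so the real work is just the bookkeeping needed to make the two-step operator's fixed point solve the coupled system.
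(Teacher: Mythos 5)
Your proof is correct, and it reaches the same destination as the paper's by a noticeably different vehicle. The paper defines its fixed-point map by freezing the coefficient: given $g$, it solves the divergence-form problem $-\mathrm{div}\left(3(g^++\varepsilon)^2\nabla\psi\right)=-v$ with $v$ obtained from $-\Delta v=-\lambda/(g^++\varepsilon)$, and then invokes the Leray--Schauder theorem, which forces it to prove an a priori $L^\infty$ bound along the whole homotopy $\psi=\sigma T(\psi)$; interestingly, that a priori bound is itself obtained by rewriting the equation as $\Delta(\psi+\varepsilon)^3=\sigma v$, i.e.\ by the very Kirchhoff substitution you use. You instead perform the substitution $z=(\psi+\varepsilon)^3-\varepsilon^3$ at the outset, so both linear problems become pure Poisson equations with right-hand sides uniformly bounded by $\lambda/\varepsilon$ on the convex set $K=\{0\le\phi\le M\}$; this yields an invariant bounded set directly and lets you apply plain Schauder rather than Leray--Schauder. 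What your route buys is a cleaner argument: no variable-coefficient elliptic theory is needed to define the operator, all estimates are manifestly uniform in $\phi$, and the homotopy estimate disappears. What the paper's route buys is that the fixed point is literally the function $\psi$ solving the divergence-form equation as stated in (R3), so no chain-rule verification $\nabla z=3(\psi+\varepsilon)^2\nabla\psi$ is needed at the end --- a step you do carry out correctly, using that $\Phi^{-1}$ is $C^1$ with bounded derivative on $[0,\infty)$ thanks to $\varepsilon>0$. Both arguments rely on the same two maximum-principle steps for the signs of $v$ and $\psi$ and on global H\"older estimates for the Dirichlet problem on a Lipschitz domain, which you justify appropriately via the exterior cone condition.
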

Later we shall see that we actually have that the strict inequality in (R2) holds.
\begin{proof} We define an operator $T$ from $L^\infty(\Omega)$ into $L^\infty(\Omega)$ as follows: We say $T(g)=\psi$ if $\psi$ is the unique solution of the problem
	\begin{eqnarray}
	-\mbox{div}\left(3(g^++\varepsilon)^2\nabla \psi\right)&=&-v \ \ \textup{in $\Omega$,}\label{ls3}\\
	\psi&=&0\ \ \textup{on $\po$,}\label{ls4}
	\end{eqnarray}
	where $v$ solves
	\begin{eqnarray}
	-\Delta v&=&-\frac{\lambda}{g^++\varepsilon} \ \ \textup{in $\Omega$,}\label{ls1}\\
	v&=&0\ \ \textup{on $\po$.}\label{ls2}
	\end{eqnarray}
	Obviously, $\frac{\lambda}{g^++\varepsilon} \in L^\infty(\Omega)$ and the two equations in \eqref{ls3} and\eqref{ls1} are  both linear and uniformly elliptic. Classical theory \cite{GT} for this type of equations asserts that there is a unique weak solution $v$ to \eqref{ls1}-\eqref{ls2} in the space $W^{1,2}(\Omega)\cap C^{0,\alpha}(\overline{\Omega})$ for some $\alpha\in (0,1)$. This, in turn, implies that problem \eqref{ls3}-\eqref{ls4} has a unique weak solution $\psi$ in the same type of function spaces. That is,  $T$ is well-defined.
	We can further conclude from these relevant a priori estimates that $T$ is also continuous and precomact. 
	To apply the Leray-Schauder fixed point theorem (\cite{GT}, p. 280), we still need to establish that for each $\sigma\in(0,1]$ and each $\psi\in L^\infty(\Omega)$ such that
	\begin{equation}\label{jt2}
	\psi=\sigma T(\psi),
	\end{equation}
	we have 
	\begin{equation}\label{jf3}
	\|\psi\|_{\infty,\Omega}\leq c.
	\end{equation}
	Here and in what follows $\|\cdot\|_{p,\Omega}$ denotes the norm in $L^p(\Omega)$.
	To see this, we observe that \eqref{jt2} is equivalent to  the following equations
	\begin{eqnarray}
	-\mbox{div}\left(3\ets^2\nabla \psi\right)&=&-\sigma v \ \ \textup{in $\Omega$,}\label{jf2}\\
	-\Delta v&=&-\frac{\lambda}{\psi^++\varepsilon} \ \ \textup{in $\Omega$,}\label{jf1}\\
	\psi&=&0\ \ \textup{on $\po$},\\
	v&=&0\ \ \textup{on $\po$}.
	\end{eqnarray}
	Note that the term on the right-hand side of \eqref{jf1} is non-positive. Thus by the maximum principle, we have
	\begin{equation}
	v\leq 0 \ \ \mbox{a.e. in $\Omega$.}
	\end{equation}
	With this in mind, we can
	apply the maximum principle to \eqref{jf2} to obtain
	\begin{equation}
	\psi\geq 0\ \ \mbox{a.e. in $\Omega$.}
	\end{equation}
	Consequently, $\psi^+=\psi$ and we can write \eqref{jf2} as
	\begin{equation}
	\Delta(\psi+\varepsilon)^3=\sigma v\ \ \mbox{a.e. in $\Omega$.}
	\end{equation}
	By the classical uniform estimate for linear elliptic equations, we deduce that for each $p>\frac{N}{2}$ there is a positive number $c=c(N, \Omega)$ such that
	\begin{eqnarray}
	\max_{\Omega}((\psi+\varepsilon)^3-\varepsilon^3)&\leq &c\|v\|_{p, \Omega},\\
	\max_{\Omega}(-v)&\leq &c\left\|\frac{\lambda}{\psi+\varepsilon}\right\|_{p, \Omega}\leq c\frac{\lambda}{\varepsilon}.
	\end{eqnarray}
	Combing the preceding two estimates yields \eqref{jf3}. This completes the proof.
\end{proof}
\begin{proof}[Proof of Theorem \ref{exis1}]
	For each $k\in \{1,2,\cdots\}$
	let $\{\psi_{k}, v_k\}$ be  a solution of the problem
	\begin{eqnarray}
	\Delta \varphi_k&=&v_k\ \ \mbox{in $\Omega$},\label{js1}\\
	\Delta v_{k}&=&\frac{\lambda}{\psi_{k}+\frac{1}{k}} \ \ \mbox{in $\Omega$},\label{js2}\\
	\psi_{k}&=&0\ \mbox{on $\partial \Omega$,}\\
	v_k&=&0\ \ \mbox{on $\partial \Omega$}
	\end{eqnarray}
	in the sense of Proposition \ref{pbe}, where
	\begin{equation}
	\varphi_k= \left(\psi_{k}+\frac{1}{k}\right)^3-\frac{1}{k^3}.
	\end{equation}
	Thus
	we have 
	\begin{equation}
	v_k\leq 0,\ \ \psi_{k}\geq 0\ \ \ \mbox{in $\Omega$.}\label{js4}
	\end{equation}
	We add the term $-v_k$ to both sides of \eqref{js1} and square the resulting equation to derive
	\begin{eqnarray}
	\io\left(\Delta\varphi_k\right)^2dx+\io v_k^2dx&=&2\io \Delta \varphi_kv_k dx
	=-2\io\nabla \varphi_k\nabla v_k dx.\label{js3}
	\end{eqnarray}
	Note  that 
	\begin{equation}
	\varphi_k=0\ \ \mbox{on $\partial \Omega$.}
	\end{equation}Multiply through \eqref{js2} by the term and integrate the resulting equation over $\Omega$ to obtain
	\begin{equation}
	-\io\nabla \varphi_k\nabla v_k dx=\lambda\io\left(\left(\psi_{k}+\frac{1}{k}\right)^2-\frac{1}{k^3\left(\psi_{k}+\frac{1}{k}\right)}\right)dx.
	\end{equation}
	Substitute this into \eqref{js3} to derive
	\begin{equation}\label{am1}
	\io\left(\Delta \varphi_k\right)^2dx+\io v_k^2dx+ 2\lambda\io \frac{1}{k^3\left(\psi_{k}+\frac{1}{k}\right)}dx= 2\lambda\io \left(\psi_{k}+\frac{1}{k}\right)^2dx.
	\end{equation}
	We deduce from Poincar\'{e}'s inequality that 
	\begin{eqnarray}
	\io\varphi_k^2dx&\leq &c\io|\nabla\varphi_k|^2dx\nonumber\\
	&=&c\io\left(\mbox{div}(\varphi_k\nabla\varphi_k)-\varphi_k\Delta\varphi_k\right)dx\nonumber\\
	&=&-c\io\varphi_k\Delta\varphi_kdx\nonumber\\
	&\leq&\frac{1}{2}\io\varphi_k^2dx+c\io|\Delta\varphi_k|^2dx,\label{am2}
	\end{eqnarray}
	from whence follows
	\begin{equation}\label{sobolev}
	\io\varphi_k^2dx\leq c\io|\Delta\varphi_k|^2dx.
	\end{equation}
	With this in mind, we are ready to estimate
	\begin{eqnarray}
	\io \left(\psi_{k}+\frac{1}{k}\right)^2dx&\leq&c\left(\io \left(\psi_{k}+\frac{1}{k}\right)^3dx\right)^{\frac{2}{3}}\nonumber\\
	&=&c\left(\io \varphi_kdx+\frac{|\Omega|}{k^3}\right)^{\frac{2}{3}}\nonumber\\
	&\leq&c\left(\io \varphi_k^2dx\right)^{\frac{1}{3}}+\frac{c}{k^2}\nonumber\\
	&\leq&c\left(\io|\Delta\varphi_k|^2dx\right)^{\frac{1}{3}}+\frac{c}{k^2}.
	\end{eqnarray}
	Use this in \eqref{am1} to obtain
	\begin{equation}
	\io\left(\Delta \varphi_k\right)^2dx+\io v_k^2dx+ 2\lambda\io \frac{1}{k^3\left(\psi_{k}+\frac{1}{k}\right)}dx\leq c.
	\end{equation}
	Since we have assumed that $\po$ is $C^{2, \alpha}$ for some $\alpha>0$, the classical Calder\'{o}n-Zygmund estimate 
	implies that $\{\left(\psi_{k}+\frac{1}{k}\right)^3\}=\{\varphi_k+\frac{1}{k^3}\}$ is bounded
	in $W^{2,2}(\Omega)$. Thus we extract a subsequence of $\{\psi_{k}+\frac{1}{k}\}$, still denoted by $\{\psi_{k}+\frac{1}{k}\}$, such that
	\begin{eqnarray}
	\psi_{k}+\frac{1}{k}&\rightarrow& \psi\ \ \mbox{strongly in $L^2(\Omega)$ and a.e. in $\Omega$},\label{at1}\\
	\left(\psi_{k}+\frac{1}{k}\right)^3&\rightarrow& \psi^3\ \ \mbox{weakly in $W^{2.2}(\Omega)$ and strongly in $W^{1,2}(\Omega)$. }\label{at2}
	\end{eqnarray}
	Similarly, we may assume that
	\begin{equation}\label{am3}
	v_k\rightharpoonup v\ \ \mbox{weakly in $L^2(\Omega)$.}
	\end{equation}
	Now we can take the limit in \eqref{js1} to obtain
	\begin{equation}\label{pse}
	\Delta\psi^3=v\ \ \mbox{in $\Omega$.}
	\end{equation}
	\begin{proposition}\label{pcm}
		The sequence $\{v_k\}$ is bounded in $W^{1.2}_{\textup{loc}}(\Omega)$.
	\end{proposition}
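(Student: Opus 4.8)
The plan is to combine a Caccioppoli-type estimate for the equation $\Delta v_k=\frac{\lambda}{\psi_k+\frac1k}$ with a weighted integral bound extracted from the \emph{first} equation, the latter being used to absorb the one genuinely dangerous term. Fix a compact subset $K$ of $\Omega$ and a cutoff function $\zeta\in C_0^\infty(\Omega)$ with $0\le\zeta\le1$ and $\zeta\equiv1$ on $K$. To abbreviate, put
\[
P_k:=\psi_k+\frac1k,\qquad w_k:=-v_k\ge0,\qquad g_k:=\frac{\lambda}{P_k}=\Delta v_k=-\Delta w_k\ge0 .
\]
From the estimates already obtained in the proof of Theorem~\ref{exis1} we know that $\{v_k\}$, hence $\{w_k\}$, is bounded in $L^2(\Omega)$ and that $\io P_k^2\,dx\le c$ uniformly in $k$; moreover $P_k\in W^{1,2}(\Omega)$ and $3P_k^2\nabla P_k=\nabla\varphi_k\in L^2(\Omega)$.

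The term that obstructs a direct Caccioppoli argument is $\io\zeta^2w_kg_k\,dx=\lambda\io\zeta^2\frac{w_k}{P_k}\,dx$: it appears with the wrong sign (since $v_k\le0$ while $\Delta v_k\ge0$), and at this point we have no reason to expect $g_k$ to be bounded on $K$, because we do not yet know that $\psi_k$ is bounded away from $0$ there. To control it I would use $\phi:=\zeta^2/P_k$ as a test function in the weak form of the first equation $-\mathrm{div}(3P_k^2\nabla P_k)=w_k$; this $\phi$ belongs to $W_0^{1,2}(\Omega)$ for each fixed $k$ because $\frac1k\le P_k$ and $P_k\in W^{1,2}(\Omega)$. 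Writing $\nabla\phi=\frac{2\zeta\nabla\zeta}{P_k}-\frac{\zeta^2\nabla P_k}{P_k^2}$ and simplifying, one is led to
\[
\io\zeta^2\frac{w_k}{P_k}\,dx=6\io\zeta P_k\,\nabla P_k\cdot\nabla\zeta\,dx-3\io\zeta^2|\nabla P_k|^2\,dx\le3\io P_k^2|\nabla\zeta|^2\,dx\le c ,
\]
where Young's inequality absorbs the term $3\io\zeta^2|\nabla P_k|^2\,dx$ into the left-hand side and the last inequality uses the uniform bound on $\io P_k^2\,dx$. Hence $\io\zeta^2w_kg_k\,dx\le c$. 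This is the step I expect to be the heart of the argument: the bad term cannot be handled within the second equation where it naturally lives, but it becomes controllable once the coupling is inserted into the first equation, whose divergence structure produces the sign-definite quantity $-3\io\zeta^2|\nabla P_k|^2\,dx$.

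Granting this, the remainder is routine. Testing the weak form of $-\Delta w_k=g_k$ against $\zeta^2w_k\in W_0^{1,2}(\Omega)$ and using Young's inequality once more gives
\[
\frac12\io\zeta^2|\nabla w_k|^2\,dx\le\io\zeta^2w_kg_k\,dx+2\io w_k^2|\nabla\zeta|^2\,dx\le c ,
\]
the last bound following from the previous step together with the $L^2(\Omega)$-bound on $\{w_k\}$. Since $\zeta\equiv1$ on $K$ this yields $\int_K|\nabla v_k|^2\,dx\le c$, and combined with $\int_K v_k^2\,dx\le\io v_k^2\,dx\le c$ we conclude that $\{v_k\}$ is bounded in $W^{1,2}(K)$. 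As $K$ was an arbitrary compact subset of $\Omega$, the proposition follows.
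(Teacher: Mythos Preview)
Your argument is correct, and it is genuinely different from the paper's. The paper never tests the first equation against $\zeta^2/P_k$; instead it exploits only the second equation together with the superharmonicity of $-v_k$. Concretely, the paper uses the weak Harnack inequality for the nonnegative superharmonic function $-v_k$ to obtain $\inf_{B_{r/2}(z)}(-v_k)\ge c\avint_{B_r(z)}(-v_k)\,dx$, then argues by contradiction (via Fatou and the limit $v$) that the right-hand side is bounded below by a positive constant for large $k$; finally it tests $\Delta(-v_k)\le 0$ against $\zeta^2/(-v_k)$, which produces the logarithmic Caccioppoli estimate $\int_{B_{r/2}(z)}v_k^{-2}|\nabla v_k|^2\,dx\le cr^{N-2}$, and the just-obtained lower bound on $-v_k$ converts this into a bound on $\int|\nabla v_k|^2$.

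Your route is more elementary and self-contained: it avoids the weak Harnack inequality and the contradiction step involving the limit $v$, and it treats all $k$ at once rather than only $k$ large. The essential reason it works is the structural coincidence that testing $-\mathrm{div}(3P_k^2\nabla P_k)=w_k$ against $\zeta^2/P_k$ makes the potentially unbounded term $3\io\zeta^2|\nabla P_k|^2\,dx$ appear with a good sign and cancel exactly after Young, leaving only the uniformly bounded $3\io P_k^2|\nabla\zeta|^2\,dx$. What the paper's approach buys in exchange is the intermediate information that $-v_k$ (and hence $-v$) is bounded below by a positive constant on each compact set, a fact used immediately afterwards in the proof of Theorem~\ref{exis1}; your method would have to recover this separately, though it is easy once one knows $v\in W^{1,2}_{\textup{loc}}(\Omega)$ and $-v$ is superharmonic and not identically zero.
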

	\begin{proof}
		Let $r>0, z\in \Omega$ be such that 
		\begin{equation}
		B_{r}(z)\subset \Omega.
		\end{equation}
		Choose a cut-off function $\zeta\in C^\infty(\RD)$ with the properties
		\begin{eqnarray}
		\zeta(x)&=&\left\{\begin{array}{ll}
		1&\mbox{if $x\in B_{\frac{r}{2}}(z)$,}\\
		0&\mbox{if $x\in \RD\setminus B_{r}(z)$,}
		\end{array}\right.\\
		0&\leq&\zeta\leq 1,\\
		|\nabla\zeta|&\leq&\frac{c}{r}.
		\end{eqnarray}
		We easily see from \eqref{js2} that
		\begin{equation}\label{am4}
		\Delta(-v_k)\leq 0\ \ \mbox{in $\Omega$}.
		\end{equation}
		That is, $-v_k$ is a non-negative superharmonic function
		in $\Omega$. Since  $v_k$ cannot be identically $0$, the strong maximum principle asserts that
		\begin{equation}
		-v_k(x)>0\ \ \mbox{in $\Omega$.}
		\end{equation}
		Furthermore, we can conclude from  Theorem 8.18 in (\cite{GT}, p.194) 
		that 
		\begin{equation}\label{jw11}
		\inf_{ B_{\frac{r}{2}}(z)}(-v_k(x))\geq c\avint_{ B_{r}(z)}(-v_k(x))dx.
		\end{equation}
		We claim that
		\begin{equation}\label{at3}
		\int_{ B_{r}(z)}(-v(x))dx>0 .
		\end{equation}
		Were this not true, we would have
		\begin{equation}
		v=0\ \ \mbox{a.e. on $B_{r}(z)$.}
		\end{equation}
		We calculate from Fatou's lemma, \eqref{at1},  \eqref{js2}, and \eqref{am3} that
		\begin{eqnarray}
		\int_{ B_{\frac{r}{2}}(z)}\frac{1}{\psi}dx&\leq &\int_{ B_{r}(z)}\frac{\zeta}{\psi}dx\nonumber\\
		&\leq&\liminf_{k\rightarrow\infty}\int_{ B_{r}(z)}\frac{\zeta}{\psi_{k}+\frac{1}{k}}dx\nonumber\\
		&=&\frac{1}{\lambda}\liminf_{k\rightarrow\infty}\int_{ B_{r}(z)}\Delta v_k\zeta dx\nonumber\\
		&=&\frac{1}{\lambda}\liminf_{k\rightarrow\infty}\int_{ B_{r}(z)} v_k\Delta\zeta dx=0.
		\end{eqnarray}
		That is, $\psi=\infty$ on $B_{\frac{r}{2}}(z)$. This contradicts \eqref{at1}. The claim \eqref{at3} follows.
		
		Use $\frac{1}{-v_k}\zeta^2$ as a test function in \eqref{am4} to obtain
		\begin{equation}
		\io\frac{1}{v_k^2}|\nabla v_k|^2\zeta^2dx\leq\io\frac{1}{-v_k}\nabla v_k2\zeta\nabla\zeta dx,
		\end{equation}
		from whence follows
		\begin{equation}\label{jw12}
		\int_{ B_{\frac{r}{2}}(z)}\frac{1}{v_k^2}|\nabla v_k|^2dx\leq cr^{N-2}.
		\end{equation}
		We can easily deduce from \eqref{jw11} and \eqref{at3} that
		there is a positive number $c$ such that 
		\begin{equation}
		\inf_{ B_{\frac{r}{2}}(z)} (-v_k)\geq c\ \ \mbox{for $k$ sufficiently large.}
		\end{equation}
		This together with \eqref{jw12} implies
		that
		\begin{equation}
		\int_{ B_{\frac{r}{2}}(z)}|\nabla v_k|^2dx\leq cr^{N-2}\ \ \mbox{for $k$ sufficiently large.}.
		\end{equation}
		Since this is true for each $r>0$ and each $z\in \Omega$ with $B_r(z)\subset\Omega$,  the proposition follows.\end{proof}

	To continue the proof of Theorem \ref{exis1}, we see from
	the proposition that $v\in W^{1,2}_{\mbox{loc}}(\Omega)$. This along with the fact that $-v$ is superharmonic in $\Omega$ asserts that 
	\begin{equation}\label{at7}
	\inf_{ B_{\frac{r}{2}}(z)}(-v)\geq c\int_{ B_{r}(z)}(-v)dx>0.
	\end{equation}
	We see from \eqref{pse} that $\psi^3$ is also superharmonic in $\Omega$. Thus there holds
	\begin{equation}\label{at11}
	\inf_{ B_{\frac{r}{2}}(z)}\psi^3\geq c\int_{ B_{r}(z)}\psi^3dx\ \ \mbox{for some $c>0$}.
	\end{equation}
	We can claim that
	\begin{equation}\label{at12}
	\int_{ B_{r}(z)}\psi^3dx>0\ \ \mbox{for each $r>0$ and each $z\in\Omega$ with $B_r(z) \subset\Omega$}
	\end{equation}
	Were this not true, we would have
	\begin{equation}
	\psi=0 \ \ \ \mbox{in $B_r(z)$ for some $r>0, z\in \Omega$ with $B_r(z) \subset\Omega$.}
	\end{equation}
	By \eqref{pse}, we also have that $v=0$ on the same ball. This contradicts \eqref{at7}.
	Obviously, if we replace $\psi$ by $\psi_{k}$ in\eqref{at11}, the resulting inequality still holds. This combined with  \eqref{at12} implies that 
	\begin{equation}\label{8as2}
	\psi_{k}\geq c\ \ \mbox{on $B_{\frac{r}{2}}(z)$ for some $c>0$.}
	\end{equation}
	Hence we can pass to the limit in \eqref{js2} to get
	\begin{eqnarray}\label{8as3}
	\Delta v=\frac{\lambda}{\psi}\ \ \ \mbox{in $\Omega$.}
	\end{eqnarray}
	This, along with  \eqref{at11},  implies that $v$ is locally bounded. With this in mind, we can use  \eqref{pse}  again to conclude that $\psi$ is also locally bounded.
	We  have actually established that for each $r>0, z\in \Omega$ with $B_r(z) \subset\Omega$ there is a positive number $c$ with
	\begin{equation}
	c\leq\psi(x)\leq \frac{1}{c}\ \ \mbox{for each $x\in B_{\frac{r}{2}}(z)$.}
	\end{equation}
	We can conclude  (C1) from a boot strap argument. Take the Laplacian of both sides of \eqref{pse} and substitute \eqref{8as3} into the resulting equation to yields (C3). The proof of Theorem \ref{exis1} is complete. \end{proof}

We would like to point out the negative impact of the boundary condition \eqref{spr2} on a priori estimates. Observe from \eqref{js1} that
\begin{eqnarray*}
	\io\Delta\left(\psi_{k}+\frac{1}{k}\right)^2dx&=&\int_{ \po}2\left(\psi_{k}+\frac{1}{k}\right)\nabla\psi_{k}\cdot\nu d\mathcal{H}^{N-1}\nonumber\\
	&=&\frac{2k}{3}\int_{ \po}3\left(\psi_{k}+\frac{1}{k}\right)^2\nabla\psi_{k}\cdot\nu d\mathcal{H}^{N-1}\nonumber\\
	&=&\frac{2k}{3}\io\Delta\left(\psi_{k}+\frac{1}{k}\right)^3dx=\frac{2k}{3}\io v_k dx\rightarrow-\infty\ \ \mbox{as $k\rightarrow\infty$.}
\end{eqnarray*}
We infer from \eqref{js1} and \eqref{js2} that
\begin{equation}\label{js5}
\io\Delta \left(\psi_{k}+\frac{1}{k}\right)^3\frac{\lambda}{\psi_{k}+\frac{1}{k}}dx=\io v_k\Delta v_k dx=-\io|\nabla v_k|^2dx.
\end{equation}
The left-hand side of the above equation can be calculated as follows:
\begin{eqnarray}
\io\Delta \left(\psi_{k}+\frac{1}{k}\right)^3\frac{\lambda}{\psi_{k}+\frac{1}{k}}dx&=&\io\Delta \left(\psi_{k}+\frac{1}{k}\right)^3\lambda \left(\frac{1}{\psi_{k}+\frac{1}{k}}-k\right)dx\nonumber\\
&&+\lambda k\io\Delta \left(\psi_{k}+\frac{1}{k}\right)^3dx\nonumber\\
&=&-\lambda\io\nabla \left(\psi_{k}+\frac{1}{k}\right)^3\nabla\frac{1}{\psi_{k}+\frac{1}{k}} dx+\lambda k\io v_kdx\nonumber\\
&=&3\lambda\io|\nabla\psi_{k} |^2dx+\lambda k\io v_kdx.
\end{eqnarray}
Combining this with \eqref{js5} yields
\begin{equation}
3\lambda\io|\nabla{\psi_{k}} |^2dx+\io|\nabla v_k|^2dx=-\lambda k\io v_kdx\rightarrow\infty\ \ \mbox{as $k\rightarrow\infty$}.
\end{equation}
It does not seem to be possible to have any estimates on $\nabla v$ on the whole domain $\Omega$. Thus the sense in which the boundary condition $v=0$ on $\po$ is satisfied is an issue.

\section{Self-similar solutions}\label{sec3}

We seek a solution of the equation $\partial_t\rho+\rho^2\Delta\rho^3=0$ on $\omt$ of the form
\begin{equation}
\rho(x,t)=t^\alpha f(y), \ \ y=\frac{x}{t^\beta}.
\end{equation}
We compute
\begin{eqnarray}
\partial_t\rho&=&\alpha t^{\alpha-1}f(\frac{x}{t^\beta})-\beta t^\alpha\nabla f(y)\frac{x}{t^{\beta+1}}\nonumber\\
&=&t^{\alpha-1}\left(\alpha f(y)-\beta y\cdot\nabla f(y)\right),\\
\rho^2\Delta^2\rho^3&=&t^{5\alpha-4\beta}f^2(y)\Delta^2f^3(y).
\end{eqnarray}
Substitute these into \eqref{cr1} to arrive at
\begin{equation}
f^2(y)\Delta^2f^3(y)t^{4\alpha-4\beta+1}-\beta y\cdot\nabla f(y)+\alpha f(y)=0\ \ \mbox{on $\RD$.}
\end{equation}
Thus we need to choose $\alpha, \beta$ so that
\begin{equation}
4\alpha-4\beta+1=0.
\end{equation}
This gives \eqref{jt1}.

\begin{proof}[Proof of Theorem \ref{exis2}] As before,  we transform the fourth-order equation \eqref{jt1} into a system of two second-order equations
	\begin{eqnarray}
	\Delta f^3(y)&=&v(y)\ \ \mbox{in $\RD$},\label{jt21}\\
	\Delta v(y)&=&-\beta y\cdot\nabla\left(\frac{1}{f(y)}\right)- \frac{4\beta-1}{4}\frac{1}{f(y)}\ \ \mbox{in $\RD$}.\label{jt22}
	\end{eqnarray}
	We seek a radially symmetric solution. That is, we assume that
	\begin{equation}
	v=v(r), \ \ \ f=f(r), 
	\end{equation}
	where $r=|y|$ is the same as before.
	Then a simple calculation shows that 
	\begin{eqnarray}
	\left(f^3(r)\right)^{\prime\prime}+\frac{N-1}{r}\left(f^3(r)\right)^{\prime}&=& v(r)\ \ \mbox{in $(0,\infty)$,}\label{re1}\\
	v^{\prime\prime}(r)+\frac{N-1}{r}v^{\prime}(r)&=&-\beta r\left(\frac{1}{f(r)}\right)^\prime- \frac{4\beta-1}{4}\frac{1}{f(r)}\ \ \mbox{in $(0,\infty)$.}\label{re2}
	\end{eqnarray}
	Multiply through \eqref{re2} by $r{^{N-1}}$ to obtain
	\begin{equation}
	\left(r^{N-1}v^{\prime}(r)\right)^\prime=-\beta\left(\frac{r^N}{f(r)}\right)^\prime+\frac{(4(N-1)\beta+1)r^{N-1}}{4f(r)}.
	\end{equation}
	Integrate to yield
	\begin{equation}
	v^{\prime}(r)=-\frac{\beta r}{f(r)}+\frac{c_1}{r^{N-1}}+\frac{4(N-1)\beta+1}{4r^{N-1}}\int_{0}^{r}\frac{s^{N-1}}{f(s)}ds.
	\end{equation}
	We take the constant of integration $c_1$ to be $0$ to avoid a blow-up at $r=0$.
	Continue to integrate the preceding equation to derive
	\begin{equation}
	v(r)=c_2-\beta\int_{0}^{r}\frac{s}{f(s)}ds+\frac{4(N-1)\beta+1}{4}\int_{0}^{r}\frac{G_1(\tau,r)}{f(\tau)}d\tau,
	\end{equation}
	where 
	\begin{equation}
	G_1(\tau,r)=\int_{\tau}^{r}\frac{\tau^{N-1}}{s^{N-1}}ds=\left\{\begin{array}{ll}
	\frac{\tau(r^{N-2}-\tau^{N-2})}{(N-2)r^{N-2}}&\mbox{if $N>2$,}\\
	\tau\ln\frac{r}{\tau}&\mbox{if $N=2$.}
	\end{array}\right.
	\end{equation}
	Multiply through \eqref{re1} by $r{^{N-1}}$ and integrate the resulting equation  to deduce
	\begin{eqnarray}
	r^{N-1}\left(f^3(r)\right)^{\prime}&=&c_3+\int_{0}^{r}s^{N-1}v(s)ds\nonumber\\
	&=&c_3+c_2r^N-\beta\int_{0}^{r}s^{N-1}\int_{0}^{s}\frac{\tau}{f(\tau)}d\tau ds\nonumber\\
	&&+\frac{4(N-1)\beta+1}{4}\int_{0}^{r}\int_0^s\frac{s^{N-1}G_1(\tau, s)}{f(\tau)}d\tau ds\nonumber\\
	&=&c_3+c_2r^N-\beta\int_{0}^{r}\frac{H_1(\tau, r)}{f(\tau)}d\tau+\frac{4(N-1)\beta+1}{4}\int_{0}^{r}\frac{G_2(\tau, r)}{f(\tau)}d\tau,
	\end{eqnarray}
	where 
	\begin{eqnarray}
	H_1(\tau, r)&=&\int_{\tau}^{r}\tau s^{N-1}ds=\frac{1}{N}\tau(r^N-\tau^N),\label{h1}\\
	G_2(\tau, r)&=&\int_{\tau}^{r}s^{N-1}G_1(\tau, s)ds\nonumber\\
	&=&\left\{\begin{array}{ll}
	\frac{1}{(N-2)}\left(\frac{1}{N}\tau r^N-\frac{1}{2}r^2\tau^{N-1}+\frac{N-2}{2N}\tau^{N+1}\right)&\mbox{if $N>2$,}\\
	\frac{1}{2}\tau r^2\ln\frac{r}{\tau}-\frac{1}{4}\tau(r^2-\tau^2)&\mbox{if $N=2$}.
	\end{array}\right.\label{g2}
	\end{eqnarray}
	As before, we let $c_3=0$ to derive
	\begin{eqnarray}
	f^3(r)&=&c_4+c_2r^2-\beta\int_{0}^{r}\int_{0}^{s}\frac{H_1(\tau, s)}{s^{N-1}f(\tau)}d\tau ds\nonumber\\
	&&+\frac{4(N-1)\beta+1}{4}\int_{0}^{r}\int_{0}^{s}\frac{G_2(\tau, s)}{s^{N-1}f(\tau)}d\tau ds\nonumber\\
	&=&c_4+c_2r^2+\int_{0}^{r}\frac{G(\tau,r)}{f(\tau)}d\tau.\label{ss1}
	\end{eqnarray}
	where
	\begin{eqnarray}\label{ss2}
	G(\tau,r)&=&-\beta\int_{\tau}^{r}\frac{H_1(\tau, s)}{s^{N-1}}ds+\frac{4(N-1)\beta+1}{4}\int_{\tau}^{r}\frac{G_2(\tau, s)}{s^{N-1}} ds.
	\end{eqnarray}
	Observe that $H_1(\tau, r), G_1(\tau, r), G_2(\tau, r)$ are all non-negative for $0\leq \tau\leq r$.
	This combined with our assumption \eqref{asmp} implies
	\begin{equation}\label{gpo}
	G(\tau,r)\geq 0\ \ \mbox{for $0\leq \tau\leq r$.}
	\end{equation}
	This fact is the key to our proof.
	Set 
	\begin{equation}
	h(r)=f^3(r).
	\end{equation}
	We can write \eqref{ss1} as 
	\begin{eqnarray}
	h(r)&=&c_4+c_2r^2+\int_{0}^{r}\frac{G(\tau, r)}{h^{\frac{1}{3}}(\tau)}d\tau.
	\end{eqnarray}
	Now fix
	\begin{equation}
	c_4,\ \ \ c_2>0.
	\end{equation}Consider the function space
	\begin{equation}
	W=\{g(r)\in C[0,\infty): g(r)\geq c_4+c_2r^2 \ \ \mbox{for each $r\geq 0$}\}.
	\end{equation}
	We define an operator $T$ on $W$ as follows: For each $g\in W$ we let
	\begin{eqnarray}\label{9m2}
	T(g)&=&c_4+c_2r^2+\int_{0}^{r}\frac{G(\tau, r)}{g^{\frac{1}{3}}(\tau)}d\tau.
	\end{eqnarray}
	To see that $T$ is well-defined on $W$, we will have to separate the case where
	\begin{equation}\label{conn}
	N>2\ \ \mbox{and $N\neq 4$} 
	\end{equation}
	from the remaining case.
	Assume \eqref{conn} to be true. We calculate from \eqref{h1} and \eqref{g2} that
	\begin{eqnarray}
	G(\tau,r)&=&-\frac{\beta}{N}\int_{\tau}^{r}\frac{\tau(s^N-\tau^N)}{s^{N-1}}ds+\frac{4(N-1)\beta+1}{4(N-2)}\int_{\tau}^{r}\frac{\frac{1}{N}\tau s^N-\frac{1}{2}s^2\tau^{N-1}+\frac{N-2}{2N}\tau^{N+1}}{s^{N-1}} ds\nonumber\\
	&=&-\frac{\beta}{N}\left(\frac{1}{2}r^2\tau-\frac{N}{2(N-2)}\tau^3+\frac{1}{(N-2)}r^{-N+2}\tau^{N+1}\right)\nonumber\\
	&&+\frac{4(N-1)\beta+1}{4(N-2)}\left(\frac{1}{2N}\tau r^2+\frac{1}{2(4-N)}\tau^3-\frac{1}{2N}r^{-N+2}\tau^{N+1}\right)\nonumber\\
	&&+\frac{4(N-1)\beta+1}{8(N-2)(N-4)}r^{-N+4}\tau^{N-1}\nonumber\\
	&=&\frac{4\beta+1}{8N(N-2)}r^2\tau-\frac{12\beta+1}{8(N-2)(N-4)}\tau^3-\frac{4(N+1)\beta+1}{8N(N-2)}r^{-N+2}\tau^{N+1}\nonumber\\
	&&+\frac{4(N-1)\beta+1}{8(N-2)(N-4)}r^{-N+4}\tau^{N-1}, \label{9m1}
	\end{eqnarray}
	from whence follows
	\begin{equation}\label{9m3}
	\int_{0}^{r}\frac{G(\tau, r)}{g^{\frac{1}{3}}(\tau)}d\tau\leq c\int_{0}^{r}G(\tau, r)d\tau\leq cr^{4}\ \ \mbox{for each $g\in W$}.
	\end{equation}
	The case where $N=2$ or $4$ can be handled in an entirely similar manner. We shall omit it here.
	By virtue of \eqref{gpo}, the range of $T$ is  contained in $W$. 
	\begin{clm}\label{clm1}
		For each $R>0$ the operator $T$ has a fixed point in the space $W_R\equiv C[0,R]\cap W$. That is, there is a function $h$ in the space such that
		\begin{equation}
		h=T(h).
		\end{equation}
	\end{clm}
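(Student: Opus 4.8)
The plan is to apply the Schauder fixed point theorem to the operator $T$ defined in \eqref{9m2}, restricted to the space $W_R = C[0,R] \cap W$ with the sup-norm topology. First I would verify that $W_R$ is a closed, convex, bounded subset of $C[0,R]$: closedness and convexity follow immediately from the defining pointwise inequality $g(r) \geq c_4 + c_2 r^2$, and an upper bound on $W_R$ is \emph{not} automatic, so instead I would work with the closed convex set $\widetilde{W}_R = \{g \in C[0,R]: c_4 + c_2 r^2 \leq g(r) \leq c_4 + c_2 r^2 + c r^4\}$, where $c$ is the constant produced by \eqref{9m3}. The estimate \eqref{9m3} shows exactly that $T$ maps $\widetilde{W}_R$ into itself: the lower bound comes from \eqref{gpo} (since the integral term is nonnegative), and the upper bound comes from $\int_0^r G(\tau,r)/g^{1/3}(\tau)\,d\tau \leq c\int_0^r G(\tau,r)\,d\tau \leq cr^4$, using that $g \geq c_4 > 0$ forces $g^{-1/3} \leq c_4^{-1/3}$.

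Next I would establish that $T$ is continuous and compact on $\widetilde{W}_R$. For compactness, I would apply the Arzel\`a--Ascoli theorem: the image $T(\widetilde{W}_R)$ is uniformly bounded by the previous paragraph, and equicontinuity follows by differentiating (or directly estimating increments of) the map $r \mapsto c_4 + c_2 r^2 + \int_0^r G(\tau,r)g^{-1/3}(\tau)\,d\tau$. The kernel $G(\tau,r)$ is explicit and polynomial in $r$ and $\tau$ (formula \eqref{9m1} in the case \eqref{conn}, and the analogous expressions for $N=2,4$), so $\partial_r G(\tau,r)$ is bounded uniformly for $0 \leq \tau \leq r \leq R$, and since $g^{-1/3} \leq c_4^{-1/3}$ is uniformly bounded, we get a modulus of continuity for $T(g)$ independent of $g \in \widetilde{W}_R$. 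Continuity of $T$ with respect to the sup-norm: if $g_n \to g$ uniformly on $[0,R]$ with all $g_n \geq c_4$, then $g_n^{-1/3} \to g^{-1/3}$ uniformly (the function $t \mapsto t^{-1/3}$ is Lipschitz on $[c_4,\infty)$), and then $T(g_n) \to T(g)$ uniformly because $G(\tau,r)$ is integrable in $\tau$ uniformly in $r \in [0,R]$.

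With $\widetilde{W}_R$ nonempty (it contains $c_4 + c_2 r^2$), closed, bounded, and convex in the Banach space $C[0,R]$, and $T: \widetilde{W}_R \to \widetilde{W}_R$ continuous and compact, Schauder's fixed point theorem yields $h \in \widetilde{W}_R$ with $h = T(h)$, which is the assertion of the claim (and in fact already gives the two-sided bound \eqref{lb1} on $[0,R]$). I expect the main obstacle to be purely bookkeeping: carefully checking that the kernel estimates underlying \eqref{9m3} and the equicontinuity bound hold uniformly on $[0,R]$ in the two exceptional dimensions $N=2$ and $N=4$, where the explicit formula \eqref{9m1} degenerates and the logarithmic forms of $G_1, G_2$ must be used. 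Since $G(\tau,r) \geq 0$ on $0 \leq \tau \leq r$ by \eqref{gpo} regardless of the dimension, and since each piece of $G$ is dominated on $[0,R]$ by a constant times a monomial (or $\tau r^2 \ln(r/\tau)$, which is bounded on the relevant range), these cases present no genuine difficulty — only the need to write the estimates out separately. After the claim is proved, a standard diagonal/monotonicity argument in $R$ (taking $R \to \infty$ and using that the fixed points are increasing and satisfy a uniform local bound) will produce a solution on all of $[0,\infty)$, from which \eqref{lb1} and the smoothness of $f$ follow.
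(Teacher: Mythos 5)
Your proposal is correct and follows essentially the same route as the paper: both apply the Schauder fixed point theorem (Corollary 11.2 of Gilbarg--Trudinger), prove continuity of $T$ from the uniform Lipschitz continuity of $s^{-1/3}$ on $[c_4,\infty)$, and obtain precompactness of the image by bounding $(T(g))'$ uniformly via \eqref{9m3} and $G(r,r)=0$. The only cosmetic difference is that you restrict to the bounded convex set $\widetilde{W}_R$ before invoking Schauder, whereas the paper works with $W_R$ directly, which is permissible since the version of the theorem it cites requires only that the image $T(W_R)$ be precompact, not that $W_R$ be bounded.
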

	\begin{proof} We wish to apply Corollary 11.2 in (\cite{GT}, p. 280). Evidently, $W_R$ is a closed convex set in $C[0,R]$ and $T$ maps $W_R$ into itself. To check that $T$ is continuous, we observe that $s^{-\frac{1}{3}}$ is uniformly Lipschitz on $[c_4, \infty)$. Let $g_1, g_2\in W_R$ be given. We estimate for $r\in [0, R]$ that
		\begin{eqnarray}
		|T(g_1)(r)-T(g_2)(r)|&\leq&\int_{0}^{r}G(\tau, r)\left|\frac{1}{g_{1}^{\frac{1}{3}}(\tau)}-\frac{1}{g_{2}^{\frac{1}{3}}(\tau)}\right|d\tau\nonumber\\
		&\leq&c\|g_1-g_{2}\|_{C[0, R]}\int_{0}^{r}G(\tau, r)d\tau\leq cR^4\|g_1-g_{2}\|_{C[0, R]}.
		\end{eqnarray}
		That is, $T$ is Lipschitz on $W_R$. To see that the range of $T$ is precompact in $C[0, R]$, for $g\in W_R$ we differentiate \eqref{9m2} to derive
		\begin{equation}\label{9t11}
		\left(T(g)\right)^\prime(r)= 2c_2r+\int_{0}^{r}\frac{\partial_rG(\tau, r)}{g^{\frac{1}{3}}(\tau)}d\tau.
		\end{equation}	
		Here we have used the fact that
		\begin{equation}
		G(r,r)=0.
		\end{equation}
		In view of \eqref{9m1} and \eqref{9m3}, we deduce
		\begin{equation}\label{9t21}
		\left|\left(T(g)\right)^\prime(r)\right|\leq 2c_2R+cR^{3}\ \ \mbox{for $r\in [0,R]$.}
		\end{equation}
		This completes the proof of the claim.
	\end{proof}
	
	Fix $R>0$ and denote by $h(r)$ the fixed point given by the above claim. We differentiate \eqref{9t11} three more times to obtain
	\begin{eqnarray}
	\left(T(g)\right)^{\prime\prime}(r)&=& 2c_2+\frac{\partial_rG(r, r)}{g^{\frac{1}{3}}(r)}+\int_{0}^{r}\frac{\partial^2_{rr}G(\tau, r)}{g^{\frac{1}{3}}(\tau)}d\tau=2c_2+\int_{0}^{r}\frac{\partial^2_{rr}G(\tau, r)}{g^{\frac{1}{3}}(\tau)}d\tau,\\
	\left(T(g)\right)^{\prime\prime\prime}(r)&=& \frac{\partial^2_{rr}G(r, r)}{g^{\frac{1}{3}}(r)}+\int_{0}^{r}\frac{\partial^3_{rrr}G(\tau, r)}{g^{\frac{1}{3}}(\tau)}d\tau=\frac{cr}{g^{\frac{1}{3}}(r)}+\int_{0}^{r}\frac{\partial^3_{rrr}G(\tau, r)}{g^{\frac{1}{3}}(\tau)}d\tau,\\
	\left(T(g)\right)^{(4)}(r)&=& \left(\frac{cr}{g^{\frac{1}{3}}(r)}\right)^\prime+\frac{\partial^3_{rrr}G(r, r)}{g^{\frac{1}{3}}(r)}+\int_{0}^{r}\frac{\partial^4_{rrrr}G(\tau, r)}{g^{\frac{1}{3}}(\tau)}d\tau.
	\end{eqnarray}
	Note from \eqref{9m1} that $\partial^3_{rrr}G(r, r)=c$ and
	\begin{equation}
	\left|\int_{0}^{r}\frac{\partial^4_{rrrr}G(\tau, r)}{g^{\frac{1}{3}}(\tau)}d\tau\right|\leq c.
	\end{equation}
	This indicates that the function $h$ lies in $C^4[0,R]\cap C^\infty(0, R]$,
	and hence $f(r)=h^{\frac{1}{3}}(r)$ is a solution to \eqref{jt1} in $B_R(0)$. 
	Observe that $\mathbf{h}\equiv(h, h^\prime,h^{\prime\prime},h^{\prime\prime\prime})$ is a bounded solution of a system of ordinary differential equations of the form $\mathbf{h}^\prime=\mathbf{F}(r,\mathbf{h})$ on $[0, R]$, where $\mathbf{F}$ is locally Lipschitz in $(0, \infty)\times(0, \infty)\times\R^3$.
	Thus we can extend $h(r)$ to $[0,\infty)$.
	The proof of Theorem \ref{exis2} is complete. \end{proof}
\begin{rem}
	It seems to be possible to find more general conditions under which $G(\tau, r)$ is non-negative. We leave this to the interested reader. The existence of a solution remains unsolved when $G(\tau, r)$ changes signs for $0\leq \tau\leq r$.
\end{rem}

\bigskip
\noindent{\bf Acknowledgment.} The author is grateful to Prof.~Jian-Guo Liu for some useful discussions during the preparation of this manuscript.
\bibliographystyle{siamplain}

\begin{thebibliography}{9}
	\bibitem{AKW}
	H. Al Hajj Shehadeh, R. V. Kohn and J. Weare, 
	{\it The evolution of a crystal surface: Analysis of a one-dimensional
		step train connecting two facets in the adl regime}, 
	Physica D: Nonlinear Phenomena, 240:1771-1784, 2011.
	\bibitem{A}
	D. M. Ambrose, {\it 
		The radius of analyticity for solutions to a
		problem in epitaxial growth on the torus}, arXiv:1807.01740 [math.AP], 2018.
	\bibitem{BCF} 
	W. K. Burton, N. Cabrera and F. C. Frank, 
	{\it The growth of crystals and the equilibrium structure of their surfaces},
	Philosophical Transactions of the Royal Society of London A: Mathematical, Physical and Engineering Sciences, 243:299-358, 1951.
	\bibitem{FGG}A. Ferrero, F. Gazzola, and H.-Ch. Grunau, {\it Decay and eventual local positivity for biharmonic parabolic equations}, Discrete Contin. Dyn. Syst., 21:1129-1157, 2008.
	\bibitem{GP}V.A. Galaktionov and S.I. Poho\^{z}aev, {\it Existence and blow-up for higher-order semilinear parabolic equations: Majorizing order-preserving operators}, Indiana Univ. Math. J., 51:1321-1338, 2002.	
	\bibitem{GLL2} 
	Y. Gao, J.-G. Liu and J. Lu,
	{\it  Weak solution of a continuum model for vicinal surface in the attachment-detachment-limited regime}, 
	SIAM J. Math. Anal., {\bf 49}(2017), 1705-1731.
	\bibitem{GLLX} Y. Gao, J.-G. Liu , X. Y. Lu and X. Xu, {\it Maximal monotone operator theory and its applications to thin film
		equation in epitaxial growth on vicinal surface},  Calc. Var. Partial Differ. Equ., {\bf 57}(2018), no. 2, Art. 55, 21 pp..  
	
	\bibitem{GK}  
	Y.~Giga and R.V.~Kohn, 
	{\it Scale-invariant extinction time estimates for some singular diffusion equations},
	Discrete Contin.~Dyn.~Syst., Ser. A {\bf 30} (2011), 509--535. 	
	\bibitem{GT} 
	D.~Gilbarg and N.S.~Trudinger, 
	{\it Elliptic Partial Differential Equations of Second  Order}, 
	Springer-Verlag, Berlin, 1983.
	
	\bibitem{GM} R. Granero-Belinch\'{o}n and M. Magliocca, {\it Global existence and decay to equilibrium for some crystal surface models}, arXiv:1804.09645v1[math.AP], 2018.
	
	\bibitem{KDM} 
	J.~Krug, H.T.~Dobbs, and S.~Majaniemi, 
	{\it Mobility for the solid-on-solid model},  
	Z.~Phys.~B {\bf 97} (1995), 281--291.
	\bibitem{LS} J.-G.~Liu and R.M. Strain, {\it Global stability for solutions to the exponential PDE describing epitaxy growth}, arXiv:1805.02246 [math.AP], 2018.
	\bibitem{LX} 
	J.-G.~Liu and X.~Xu,
	{\it Existence theorems for a multi-dimensional crystal surface model}, 
	SIAM J. Math. Anal., 48:3667-3687, 2016.
	\bibitem{LX2} J.-G.~Liu and X.~Xu, {\it Analytical validation of a continuum model for the evolution of a crystal surface in multiple space dimensions}, SIAM J. Math. Anal., 49:2220-2245, 2017.
	
	\bibitem{MN} 
	D. Margetis, K. Nakamura, 
	{\it From crystal steps to continuum laws: Behavior near large facets in one dimension},
	Physica D, 240:1100-1110, 2011.
	
	\bibitem{MW} 
	J.L.~Marzuola and J.~Weare, 
	{\it Relaxation of a family of broken-bond crystal surface models}, 
	Physical Review, E {\bf 88} (2013), 032403.
	\bibitem{R}
	\newblock J. R. Rodrigues,
	\newblock \emph{Obstacle Problems in Mathematical Physics}, North-Holland Math. Studies, Vol.\textbf{134},
	North-Holland, Amsterdam, 1987.
	\bibitem{X} 
	Y. Xiang, 
	{\it Derivation of a continuum model for epitaxial growth with elasticity on vicinal surface},
	SIAM J. Appl. Math., {\bf 63}(2002), 241-258. 
	
	\bibitem{XX} 
	H. Xu and Y. Xiang, 
	{\it Derivation of a continuum
		model for the long-range elastic interaction on stepped epitaxial
		surfaces in $2+1$ dimensions}, 
	SIAM J. Appl. Math., 69:1393--1414, 2009.
	
\end{thebibliography}

\end{document}